\providecommand{\U}[1]{\protect\rule{.1in}{.1in}}
\providecommand{\U}[1]{\protect\rule{.1in}{.1in}}
\DeclareMathSymbol{\subsetneqq}{\mathbin}{AMSb}{36}
\theoremstyle{plain}
\numberwithin{equation}{section}
\newtheorem{theorem}{Theorem}[section]
\newtheorem{lemma}{Lemma}[section]
\newtheorem{definition}{Definition}[section]
\begin{document}
\title[On the convergence of a perturbed one dimensional Mann's process]
{On the convergence of a perturbed one dimensional Mann's process}%
\author{Ramzi May}%
\address{Department of Mathematics and Statistics, College of Science, King Faisal University, Al-Ahsa, Kingdom of Saudi Arabia}
\email{rmay@kfu.edu.sa}
\keywords{discrete and continuous dynamical systems, Mann's iterative process, ordinary differential equation}
\vskip 0.2cm
\date{ April 21, 2025}

\begin{abstract} We consider the perturbed Mann's iterative process
\begin{equation}
\label{pMp}
  x_{n+1}=(1-\theta_n)x_n+\theta_n f(x_n)+r_n,
\end{equation}
where $f:[0,1]\rightarrow[0,1]$ is a continuous function, $\{\theta_n\}\in [0,1]$ is a given sequence, and $\{r_n\}$ is the error term. We establish that if the sequence $\{\theta_n\}$ converges relatively slowly to $0$  and the error term $r_n$ becomes enough small at infinity, any sequences $\{x_n\}$  generated by (\ref{pMp}) converges to a fixed point of the function $f$. We also study the asymptotic behavior of the trajectories  $x(t)$ as $t\rightarrow\infty$ of a continuous version of the discrete process (\ref{pMp}). We investigate the similarities between the asymptotic behaviours of the sequences generated by (\ref{pMp}) and the trajectories of the corresponding continuous process. \end{abstract}
\maketitle

\section{Introduction and presentation of the main results}

Let $f:[0,1]\rightarrow[0,1]$ be a continuous function. The classical Brouwer's fixed point theorem \cite{Bro}) or a  simple application of the intermediate value theorem ensures that the function $f$ has at least one fixed point. In order to determine a numerical approximation of fixed points of $f$, W. Robert Mann \cite{Man} introduced the following iterative process
\begin{equation}
\label{Mp}
  x_{n+1}=\frac{n}{n+1}x_n+\frac{1}{n+1}f(x_n)
\end{equation}
and proved the following result.
\begin{theorem}
\label{Th01}
Let $f:[0,1]\rightarrow[0,1]$ be a continuous function. If $f$  has a
unique fixed point $p\in [0,1]$, then, for any initial data $%
x_{0}\in [0,1],$ the sequence $\{x_{n}\}$ generated by the process (\ref{Mp})
converges towards $p$.
\end{theorem}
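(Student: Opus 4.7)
The plan is to exploit the one-dimensional geometry via a sign analysis of $g(x):=f(x)-x$. Since $f$ maps $[0,1]$ into itself we have $g(0)\geq 0$ and $g(1)\leq 0$, and by uniqueness of the fixed point $p$ the function $g$ cannot vanish off $\{p\}$. Combining these facts with the intermediate value theorem yields
\[
(f(x)-x)(x-p)\leq 0 \qquad \text{for all } x\in[0,1],
\]
with equality if and only if $x=p$. This attracting sign condition, which comes \emph{for free} from the one-dimensional setting, will be the engine of the proof.

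Next, I would derive a quasi-F\'ejer energy estimate. Setting $\theta_n:=1/(n+1)$, so that $x_{n+1}=x_n+\theta_n(f(x_n)-x_n)\in[0,1]$, expanding the square gives
\[
(x_{n+1}-p)^2 \;=\; (x_n-p)^2 + 2\theta_n(f(x_n)-x_n)(x_n-p) + \theta_n^2 (f(x_n)-x_n)^2.
\]
The middle term is nonpositive by the sign condition, while the last is bounded above by $\theta_n^2$ since $x_n,f(x_n)\in[0,1]$. Because $\sum_n \theta_n^2<\infty$, a standard telescoping argument then shows that $(x_n-p)^2$ converges to some limit $L\geq 0$ and, crucially, that the series $\sum_n \theta_n (f(x_n)-x_n)(p-x_n)$ is finite, with nonnegative summands.

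Finally, I would rule out $L>0$ by contradiction. Assuming $L>0$, one has $|x_n-p|\geq \sqrt{L}/2$ for all sufficiently large $n$, so $x_n$ lies in the compact set $K:=\{x\in[0,1]:|x-p|\geq \sqrt{L}/2\}$. The continuous nonnegative function $h(x):=(f(x)-x)(p-x)$ vanishes only at $p\notin K$, hence admits a positive lower bound $\delta>0$ on $K$. But then $\sum_n \theta_n h(x_n)\geq \delta \sum_{n\text{ large}}\theta_n=+\infty$ since the harmonic series diverges, contradicting the summability established in the previous step. Therefore $L=0$ and $x_n\to p$.

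The main conceptual obstacle is really the very first step: recognizing that the hypotheses "$f:[0,1]\to[0,1]$ continuous with a unique fixed point" already force the attracting sign condition on $g$, without any monotonicity, contractivity, or smoothness assumption. Once this observation is in hand, the F\'ejer-type analysis and the divergence of $\sum \theta_n$ combine in a textbook way and no subsequential compactness argument is needed. The same strategy should then adapt directly to the perturbed process \eqref{pMp} of the abstract, provided the error term $r_n$ is small enough to be absorbed into the quasi-F\'ejer inequality.
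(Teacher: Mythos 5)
Your proof is correct, but it follows a genuinely different route from the paper's. Both arguments begin with the same key observation: uniqueness of the fixed point plus $g(0)\geq 0$, $g(1)\leq 0$ and the intermediate value theorem force the attracting sign condition $g(x)>0$ on $[0,p)$ and $g(x)<0$ on $(p,1]$. From there the paper runs an $\varepsilon$-trapping argument: off the band $[p-\varepsilon,p+\varepsilon]$ the quantity $|g|$ has a uniform lower bound $\delta$, so if the sequence stayed outside forever the divergence of $\sum\frac{1}{n+1}$ would drive it out of $[0,1]$; hence it enters the band, and once the steps are smaller than $\varepsilon$ the sign of $g$ prevents it from leaving. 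Your argument instead uses the Lyapunov function $(x_n-p)^2$ and a quasi-F\'ejer inequality, exploiting $\sum_n\theta_n^2<\infty$ to get convergence of $(x_n-p)^2$ and summability of $\sum_n\theta_n(f(x_n)-x_n)(p-x_n)$, and then $\sum_n\theta_n=\infty$ to force the limit to be zero; all steps check out (in particular the positivity of $h(x)=(f(x)-x)(p-x)$ on the compact set $K$ is justified since $p\notin K$). What each approach buys: yours is shorter, avoids the entry-then-trapping case analysis, and needs no subsequential compactness; the paper's uses only $\theta_n\to 0$ and $\sum\theta_n=\infty$ (no square-summability) and, more importantly, does not require a distinguished anchor point $p$, which is why its omega-limit-set variant extends to Theorems \ref{Th02} and \ref{Th1} where $f$ may have many fixed points. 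For that reason your closing remark that the same strategy "adapts directly" to the perturbed process \eqref{pMp} is optimistic: Theorem \ref{Th1} assumes neither $\sum\theta_n^2<\infty$ nor uniqueness of the fixed point, and without a single $p$ the quasi-F\'ejer functional $(x_n-p)^2$ has no obvious replacement. This does not affect the validity of your proof of Theorem \ref{Th01} itself.
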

Later, R.L. Franks and R.P. Marzec in a short and a very nice paper \cite{FM} proved that the condition on the uniqueness of the fixed points of the function $f$ in the previous theorem  is not necessary. Precisely, they proved the following theorem.
\begin{theorem}
\label{Th02}
If $f:[0,1]\rightarrow[0,1]$ is a continuous function then, for any initial data $%
x_{0}\in [0,1],$ the sequence $\{x_{n}\}$ generated by the process (\ref{Mp})
converges towards a fixed point of $f$
\end{theorem}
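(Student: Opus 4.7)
The plan is to study the recursion through its one-step increments and the limit set of the sequence. Writing $\theta_n=1/(n+1)$ and $g(x)=f(x)-x$, the recursion becomes
\[
  x_{n+1}-x_n \;=\; \theta_n\, g(x_n),
\]
so that, since $g$ is bounded on $[0,1]$ and $\theta_n\to 0$, we have $|x_{n+1}-x_n|\to 0$. Let $a=\liminf_n x_n$ and $b=\limsup_n x_n$. The aim is to show that $a=b$ and that this common value is a fixed point of $f$; by compactness of $[0,1]$ these two facts together give the theorem.

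Assume for contradiction that $a<b$, and pick any $c\in(a,b)$. Since the sequence takes values $\le c$ and values $>c$ infinitely often, a pigeonhole argument produces infinitely many ``up-crossing'' indices $n_k$ with $x_{n_k}\le c<x_{n_k+1}$. Extracting a convergent sub-subsequence, Bolzano--Weierstrass gives $x_{n_k}\to L\le c$, and because $x_{n_k+1}-x_{n_k}\to 0$ we also have $x_{n_k+1}\to L$; combined with $x_{n_k+1}>c$, this forces $L=c$. Since $\theta_{n_k}\,g(x_{n_k})=x_{n_k+1}-x_{n_k}>0$ and $\theta_{n_k}>0$, continuity of $g$ at $c$ yields $g(c)\ge 0$. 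The symmetric ``down-crossing'' argument yields $g(c)\le 0$, so $g(c)=0$. Continuity then propagates this equality to the endpoints, and every point of $[a,b]$ is a fixed point of $f$.

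What remains is to rule out the possibility that $[a,b]$ is nondegenerate. If some $x_n$ were to lie in $[a,b]$, then $g(x_n)=0$ and the sequence would be constant from that index on, contradicting $a<b$; hence $x_n\in[0,a)\cup(b,1]$ for every $n$. Because $|x_{n+1}-x_n|\to 0<b-a$, for $n$ large no step can jump across the gap $[a,b]$, so the sequence must remain on one side of it from some index onwards; this contradicts the fact that both $a$ and $b$ are accumulation values. Therefore $a=b=:p$. Finally, telescoping the recursion gives
\[
  x_n - x_0 \;=\; \sum_{k=0}^{n-1}\theta_k\, g(x_k),
\]
and since the left-hand side converges while $\sum\theta_k=+\infty$ and $g(x_k)\to g(p)$ by continuity, one necessarily has $g(p)=0$; so $p$ is a fixed point of $f$. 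I expect the main obstacle to be the up-/down-crossing analysis in the middle paragraph, where one must carefully combine three ingredients---the interleaving forced by the $\liminf$/$\limsup$ gap, the vanishing of the step size, and the continuity of $g$---to transfer a discrete sign pattern of the increments into a sign of the continuous function~$g$ at the level $c$.
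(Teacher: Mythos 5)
Your proof is correct. The paper does not prove Theorem~\ref{Th02} directly (it cites Franks--Marzec), but its proof of the more general Theorem~\ref{Th1} specializes to it, and your argument follows the same overall skeleton: the limit set is an interval $[a,b]$ because $x_{n+1}-x_n\to0$; a crossing argument shows every interior point of $[a,b]$ is fixed; a contradiction forces $a=b$; and divergence of $\sum\theta_n$ identifies the limit as a fixed point (your telescoping at the end is exactly the paper's summation step). Where you genuinely depart from the paper is in excluding $a<b$. The paper starts an iterate near the midpoint $c$ of $[a,b]$, observes that the orbit satisfies $x_{n+1}=x_n+r_n$ while it stays in a subinterval of fixed points, and uses summability of the errors to show it can never escape, contradicting $b\in\omega(\{x_n\})$. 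You instead note that in the unperturbed case an iterate landing in $[a,b]$ freezes the orbit forever, so no iterate ever enters $[a,b]$, and then the small-step property traps the orbit on one side of the gap, contradicting that both $a$ and $b$ are accumulation points. Your version is shorter and cleaner here, but it exploits $r_n\equiv0$ in an essential way: with a nonzero error term the orbit does not freeze on the fixed-point interval, so your trapping argument would not carry over to the perturbed process of Theorem~\ref{Th1}, whereas the paper's midpoint-tracking argument is built precisely to absorb the errors via the condition $\sum r_n<\infty$. Your up-/down-crossing step (subsequential limits of crossing indices giving $g(c)\ge0$ and $g(c)\le0$) is a mild variant of the paper's ``last exit'' argument and is equally valid.
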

Our first objective in this paper is to investigate the effect of possible small computational errors on the asymptotic behaviour of the iterative process (\ref{Mp}). We will establish the following result.
\begin{theorem}
\label{Th1}
Let $f:[0,1]\rightarrow[0,1]$ be a continuous function and $\{\theta_n\}\in[0,1]$ a given real sequence. Let  $\{x_{n}\}\in[0,1]$ be a sequence satisfying the perturbed iterative process
\begin{equation}
\label{PMp}
  x_{n+1}=(1-\theta_n) x_n+\theta_n f(x_n)+r_n,
\end{equation}
where $\{r_n\}$ is a real sequence. If
\begin{enumerate}
  \item $\theta _{n}\rightarrow 0$ as $n\rightarrow \infty $,
  \item $\sum_{n=0}^{\infty }\theta _{n}$ diverges,
  \item $\frac{r_{n}}{\theta _{n}}\rightarrow 0$ as $n\rightarrow
\infty $,
\item $\sum_{n=0}^{\infty }r_{n}$ converges,
\end{enumerate}
then the sequence $\{x_{n}\}$ converges towards a fixed point of $f$.
\end{theorem}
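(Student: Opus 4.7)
The plan is to adapt the classical Franks--Marzec argument used to prove Theorem~\ref{Th02} to the perturbed, variable step-size setting, working in four steps.

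\textbf{Vanishing increments and shape of the limit set.} From the identity $x_{n+1}-x_n=\theta_n(f(x_n)-x_n)+r_n$ together with $|f(x_n)-x_n|\le 1$, $\theta_n\to 0$, and $r_n\to 0$ (a consequence of $r_n/\theta_n\to 0$), one immediately obtains $x_{n+1}-x_n\to 0$. Since $\{x_n\}\subset[0,1]$ is bounded with vanishing consecutive differences, a standard argument gives that the set $L$ of subsequential limits is a compact interval $[m,M]$, where $m=\liminf x_n$ and $M=\limsup x_n$.

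\textbf{Endpoints are fixed points.} To show $f(M)=M$, I would rule out both $f(M)>M$ and $f(M)<M$. If $f(M)>M$, pick $\varepsilon,\delta>0$ with $f(x)-x\ge\varepsilon$ on $[M-\delta,M+\delta]$; using $r_n=o(\theta_n)$, for $n$ large one has $x_{n+1}-x_n\ge\theta_n\varepsilon/2$ whenever $x_n$ lies in this neighborhood. Combined with $x_{n+1}-x_n\to 0$ and $\limsup x_n=M$, once $x_n$ enters a small neighborhood of $M$ it cannot leave from above and must increase monotonically; the divergence $\sum\theta_n=\infty$ then forces $x_n$ arbitrarily far above $M$, contradicting $M=\limsup x_n$. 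The case $f(M)<M$ is symmetric, and the analogous argument yields $f(m)=m$.

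\textbf{Interior crossings and convergence.} To conclude $m=M$, suppose not. For any $c\in(m,M)$, since $c$ is a limit point with $\liminf<c<\limsup$, the sequence crosses $c$ infinitely often in both directions. At an up-crossing $x_n\le c<x_{n+1}$, the inequality $x_{n+1}-x_n\ge 0$ together with $r_n=o(\theta_n)$ yields $f(x_n)-x_n\ge -r_n/\theta_n\to 0$; since $x_n\to c$ along such crossings, continuity forces $f(c)\ge c$, and the analogous argument for down-crossings gives $f(c)\le c$. Hence $f\equiv\mathrm{id}$ on $[m,M]$. Introducing the auxiliary sequence $y_n=x_n-\sum_{k<n}r_k$, which satisfies $y_{n+1}-y_n=\theta_n(f(x_n)-x_n)$ and differs from $x_n$ by a convergent quantity, and using that $f(x_n)-x_n\to 0$ whenever $x_n$ remains close to the limit set, I would derive -- together with $\sum r_n<\infty$ -- that $\{x_n\}$ must be Cauchy, contradicting $m<M$.

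\textbf{Main obstacle.} The delicate point is the last step. When $f\equiv\mathrm{id}$ on $[m,M]$, the damping term $\theta_n(f(x_n)-x_n)$ vanishes on the limit set, so no standard Lyapunov-type decrease is available; convergence must be extracted from the subtle interplay between the convergence of $\sum r_n$ (which bounds the total perturbation drift) and the smallness of $\theta_n(f(x_n)-x_n)$ during excursions of $x_n$ away from $[m,M]$. Since only conditional (not absolute) convergence of $\sum r_n$ is assumed, one cannot simply bound the total variation of $\{x_n\}$, and the control of these excursions must be done with care, balancing the summable perturbation against the possibly non-summable weights $\theta_n$.
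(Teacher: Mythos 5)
Your Steps 1 and 2 are sound, and your crossing argument in Step 3 showing $f(c)=c$ for every $c\in(m,M)$ is correct and in fact a clean variant of what the paper does (the paper instead fixes a neighborhood $[w-\delta,w+\delta]\subset(m,M)$ on which $f(x)\ge x+\varepsilon$, takes the last index $m$ in a downward transit with $x_m\ge w-\delta$, and contradicts $x_{m+1}\ge x_m$). The genuine gap is exactly where you flag it: you never prove $m=M$. Your proposed route via $y_n=x_n-\sum_{k<n}r_k$ does not close, because away from the limit set you only know $f(x_n)-x_n=o(1)$, and $\theta_n\cdot o(1)$ summed against the divergent series $\sum\theta_n$ gives no control whatsoever; no amount of ``care with excursions'' repairs this as stated.

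The missing idea is to localize so that the damping term is \emph{exactly} zero rather than merely small. Since $f\equiv\mathrm{id}$ on the closed subinterval $[c-2\alpha,c+2\alpha]\subset(m,M)$, where $c=(m+M)/2$ and $\alpha=(M-m)/8$, the recursion reduces to the exact identity $x_{n+1}=x_n+r_n$ for every $n$ with $x_n\in[c-2\alpha,c+2\alpha]$. Choose $m_0$ so large that $\left\vert x_{m_0}-c\right\vert<\alpha$ (possible since $c$ is a limit point) and $\left\vert\sum_{n=m_0}^{m}r_n\right\vert<\alpha$ for all $m\ge m_0$ (Cauchy criterion for the convergent series $\sum r_n$; conditional convergence suffices, and no bound on total variation is ever needed). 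Let $m_1>m_0$ be the first exit time from $[c-2\alpha,c+2\alpha]$, which exists because $m$ and $M$ are limit points lying outside this subinterval. Telescoping $x_{n+1}=x_n+r_n$ for $m_0\le n<m_1$ gives $x_{m_1}=x_{m_0}+\sum_{n=m_0}^{m_1-1}r_n$, hence $\left\vert x_{m_1}-c\right\vert<2\alpha$, contradicting the definition of $m_1$. This forces $m=M$; your Step 2 argument, applied to the limit $x_\infty$, then correctly shows that the limit is a fixed point via $\sum\theta_n=\infty$.
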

Inspired by the works \cite{JH}, \cite{Hal}, \cite{Har}, \cite{May} and \cite{SBC}, we also study the asymptotic behavior of the trajectories $x(t)$  of a continuous version
of the discrete dynamical system (\ref{PMp}).
\begin{theorem}
\label{Th2}
Let $f:[0,1]\rightarrow[0,1]$ and $\theta:[0,\infty)\rightarrow [0,\infty)$ be two continuous functions and let $x:[0,\infty)\rightarrow[0,1]$ be a continuous differentiable function that satisfies  the perturbed differential equation:
\begin{equation}
\label{CMp}
  x'(t)+\theta(t)x(t)=\theta(t)f(x(t))+r(t), t\geq0,
\end{equation}
where $r:[0,\infty)\rightarrow \mathbb{R} $ is a continuous function. If
\begin{enumerate}
  \item $\lim_{t\rightarrow \infty }\frac{r(t)}{\theta (t)}=0,$
  \item $\int_{0}^{\infty }\theta (t)dt$ diverges,
  \item $\int_{0}^{\infty }r(t)dt$ converges,
\end{enumerate}
then  $x(t)$ converges towards a fixed point of $f$ as $t\rightarrow\infty$.
\end{theorem}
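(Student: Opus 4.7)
My first move is to absorb $\theta$ via a time reparametrization. Since hypothesis (i) forces $\theta(t)>0$ for all large $t$, setting $\tau(t)=\int_0^t\theta(s)\,ds$ yields, by hypothesis (ii), an increasing bijection from some $[T_0,\infty)$ onto $[\tau(T_0),\infty)$; with $y(\tau)=x(t(\tau))$, the equation (\ref{CMp}) becomes
\[y'(\tau)=g(y(\tau))+\varepsilon(\tau),\qquad g:=f-\mathrm{Id},\ \varepsilon(\tau):=\frac{r(t(\tau))}{\theta(t(\tau))}.\]
Hypothesis (i) gives $\varepsilon(\tau)\to 0$, and the substitution $d\tau=\theta\,dt$ converts hypothesis (iii) into convergence of $\int_0^\infty\varepsilon(\tau)\,d\tau=\int_0^\infty r(t)\,dt$. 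It therefore suffices to show that any $C^1$ solution $y:[0,\infty)\to[0,1]$ of the displayed ODE tends to a zero of $g$, i.e.\ to a fixed point of $f$.

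Set $\ell:=\liminf_{\tau\to\infty}y$ and $L:=\limsup_{\tau\to\infty}y$; the target is $\ell=L$, argued by contradiction in two substeps. \emph{Substep A}: show that if $\ell<L$ then $g\equiv 0$ on $[\ell,L]$. Fix $c\in(\ell,L)$ with, WLOG, $g(c)>0$; pick $\delta>0$ with $g>\tfrac12 g(c)$ on $(c-\delta,c+\delta)$ and $T$ with $|\varepsilon|<\tfrac14 g(c)$ on $[T,\infty)$. The ODE then gives $y'(\tau)>\tfrac14 g(c)>0$ at any $\tau\ge T$ with $y(\tau)\in(c-\delta,c+\delta)$; in particular $y'(\tau)>0$ whenever $\tau\ge T$ and $y(\tau)=c$. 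Since $\limsup y>c$, $y$ reaches $c$ from below at some $\tau_0>T$; if $\tau^*>\tau_0$ were a first return time with $y(\tau^*)=c$, then $y'(\tau^*)\le 0$ (approach from above) would contradict the strict positivity. So $y>c$ on $(\tau_0,\infty)$, contradicting $\liminf y=\ell<c$. The case $g(c)<0$ is symmetric, and continuity promotes $g\equiv 0$ from $(\ell,L)$ to $[\ell,L]$.

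\emph{Substep B}: the convergence of $\int\varepsilon$ then kills the oscillation. With $g\equiv 0$ on $[\ell,L]$, the ODE reduces to $y'=\varepsilon$ whenever $y$ stays in $[\ell,L]$. Picking $c,d$ with $\ell<c<d<L$, the oscillation of $y$ produces infinitely many intervals $[\tau_n,u_n]$, $\tau_n\to\infty$, on which $y(\tau_n)=c$, $y(u_n)=d$ and $y([\tau_n,u_n])\subset[c,d]\subset(\ell,L)$; integrating gives $d-c=\int_{\tau_n}^{u_n}\varepsilon(s)\,ds$. But the Cauchy criterion for the convergent integral $\int_0^\infty\varepsilon$ forces the right-hand side to be eventually below $(d-c)/2$ in absolute value, a contradiction. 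Hence $\ell=L$ and $y(\tau)\to p\in[0,1]$; since $y'(\tau)\to g(p)$ and $y$ is bounded, one must have $g(p)=0$, i.e.\ $f(p)=p$. I expect the main difficulty to lie in Substep A (the crossing/no-return argument); Substep B is then a clean use of hypothesis (iii) to rule out the drift scenario that hypotheses (i)--(ii) alone would permit on a plateau of fixed points of $f$.
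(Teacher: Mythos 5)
Your argument is correct, and its skeleton is the same as the paper's: identify the limit set of the trajectory as an interval (your $[\ell,L]$, the paper's $\omega(x(t))=[a,b]$), show every interior point would have to be a fixed point of $f$, use the convergence of $\int r$ to rule out oscillation across the resulting plateau, and finally identify the limit as a fixed point. The implementation differs in two genuine ways. First, your time change $\tau=\int_0^t\theta(s)\,ds$ (legitimate under the same implicit reading of hypothesis (1) that the paper uses, namely $\theta>0$ for large $t$) normalizes the equation to $y'=f(y)-y+\varepsilon(\tau)$ and converts hypothesis (2) into the unboundedness of the new time variable; this buys you a cleaner final step ($y'\to g(p)$ plus boundedness of $y$ forces $g(p)=0$), where the paper instead integrates $x'\le-\varepsilon\theta(t)$ against the divergent $\int\theta$. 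Second, the paper isolates a crossing lemma (its Lemma 3: for arbitrarily late time windows the trajectory traverses $[c,d]$ in either direction while remaining in $[c,d]$) and invokes it in both halves of the argument; you replace its use in Substep A by a first-hitting/no-return argument (once $y$ crosses $c$ after time $T$ it cannot return, contradicting $\liminf y<c$), which is equally valid, but in Substep B you simply assert the existence of the traversal windows $[\tau_n,u_n]$. That assertion is precisely the content of the paper's Lemma 3; it follows from the same $\sup$/$\inf$ hitting-time construction you already deploy in Substep A, so this is a routine omission rather than a gap, but a polished write-up should include it.
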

 The sequel of paper is organized as follows. In the next section, we provide a detail proof of Theorem \ref{Th1}. The third section is devoted to the proof of Theorem \ref{Th2}. In the fourth section, we study through  a numerical experiment  the effect of the sequence $\{\theta_n\}$ and the error term $\{r_n\}$ on the rate of convergence of the sequences $\{x_n\}$ generated by the process (\ref{PMp}). The last section is devoted to a complete and detail proof of Mann's convergence original result (Theorem \ref{Th01}).

\section{On the convergence of the perturbed discrete dynamical system (\ref{PMp})}
In this section, we provide a detail proof of Theorem \ref{Th1}. The proof is greatly inspired by the original  paper \cite{FM}. It essentially relies on the
classical notion of the omega limit set of a real sequence. We recall here
briefly this notion and some of its main properties.

\begin{definition}
Let $\{x_{n}\}$ be a real sequence. The omega limit set of the sequence $%
\{x_{n}\}$ is the set $\omega (\{x_{n}\})$ of real numbers $z$ such
that $z=\lim_{n\rightarrow \infty }x_{n_{k}}$ for some subsequence $%
\{x_{n_{k}}\}$ of the sequence $\{x_{n}\}.$
\end{definition}

\begin{lemma}
\label{l1}
Let $\{x_{n}\}$ be a real sequence. Then

\begin{enumerate}
  \item $\omega (\{x_{n}\})=\cap _{n\in \mathbb{N}}\overline{%
\{x_{m}:m\geq n\}}^{\mathbb{R}}$ where $\overline{%
\{x_{m}:m\geq n\}}^{\mathbb{R}}$ denotes the closure of the set $\{x_{m}:m\geq n\}$ in $\mathbb{R}$.
\item If $\{x_{n}\}$ is bounded then $\omega (\{x_{n}\})$ is a nonempty compact
subset of $\mathbb{R}$.
\item If $\{x_{n}\}$ is bounded then $\{x_{n}\}$ converges in $\mathbb{R%
}$ if and only if the set $\omega (\{x_{n}\})$ is reduced to a single
element.
\item If $\lim_{n\rightarrow \infty }x_{n+1}-x_{n}=0$ then $\omega
(\{x_{n}\})$ is a connected subset of $\mathbb{R}$.
\end{enumerate}
\end{lemma}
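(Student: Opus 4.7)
The plan is to handle the four assertions in order; parts (1)--(3) are standard exercises about subsequential limits, while (4) is the assertion that carries real content and will be the main obstacle.

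For (1), I would prove the two inclusions directly from the definition. The inclusion $\omega(\{x_n\}) \subset \cap_n \overline{\{x_m : m \geq n\}}$ is immediate: if $x_{n_k} \to z$, then for any fixed $N$ all but finitely many $n_k$ exceed $N$, so $z$ belongs to the closure of $\{x_m : m \geq N\}$. For the reverse inclusion, given $z$ in every tail closure, I would build a subsequence by induction: at step $k$ choose $n_k > n_{k-1}$ with $|x_{n_k} - z| < 1/k$, which is possible because $z \in \overline{\{x_m : m \geq n_{k-1}+1\}}$.

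For (2), assume $\{x_n\}$ is bounded. Then $\omega(\{x_n\})$ lies in a compact interval, and by (1) it is an intersection of closed sets, hence closed and therefore compact. Non-emptiness is the Bolzano--Weierstrass theorem applied to $\{x_n\}$. Part (3) follows at once: one direction is trivial, and for the other, if $\omega(\{x_n\}) = \{z\}$ but $x_n \not\to z$, some $\varepsilon > 0$ and some subsequence $\{x_{n_k}\}$ satisfy $|x_{n_k} - z| \geq \varepsilon$; extracting a convergent sub-subsequence by Bolzano--Weierstrass yields a second element of $\omega(\{x_n\})$, a contradiction.

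The main work is in (4), where I would argue by contradiction using the characterization of connected subsets of $\mathbb{R}$. If $\omega(\{x_n\})$ is disconnected, I can pick $a < c < b$ with $a, b \in \omega(\{x_n\})$ and $c \notin \omega(\{x_n\})$. The latter, together with (1), produces some $\varepsilon > 0$ and some $N$ such that $|x_n - c| \geq \varepsilon$ for all $n \geq N$; I shrink $\varepsilon$ if needed so that $a < c - \varepsilon$ and $b > c + \varepsilon$. The hypothesis $x_{n+1} - x_n \to 0$ then furnishes $N' \geq N$ with $|x_{n+1} - x_n| < 2\varepsilon$ for $n \geq N'$, so consecutive terms cannot jump across the forbidden interval $(c - \varepsilon, c + \varepsilon)$; hence from index $N'$ onward the sequence is confined to one of the two half-lines $(-\infty, c-\varepsilon]$ or $[c+\varepsilon, \infty)$. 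But $a \in \omega(\{x_n\})$ forces infinitely many $x_n$ below $c - \varepsilon$, while $b \in \omega(\{x_n\})$ forces infinitely many above $c + \varepsilon$, a contradiction. The only delicate point is the coordinated choice of $\varepsilon$: it must be small enough that $a$ and $b$ lie strictly on opposite sides of the gap $(c-\varepsilon, c+\varepsilon)$, yet the gap must still be wide enough to block the eventual small steps of the sequence — both are achieved simultaneously by taking $\varepsilon$ smaller than $\min(c-a, b-c)$ before invoking the step-size hypothesis.
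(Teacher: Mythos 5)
Your proof is correct in all four parts. Note that the paper itself offers no proof of this lemma --- it explicitly declares the result ``classical and easy'' and leaves it to the reader --- so there is nothing to compare against; your argument is the standard one and fills that gap completely, including the only nontrivial point, namely the coordinated choice of $\varepsilon$ in part (4) so that the gap $(c-\varepsilon,c+\varepsilon)$ both separates $a$ from $b$ and is too wide for the eventually small steps to cross.
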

The proof of this lemma is classical and easy, it is then left for the readers. Now, we are in position to prove the main result Theorem \ref{Th1}.
\begin{proof}
Since the sequence $\{x_{n}\}$ is in $[0,1]$ and satisfies
\[
x_{n+1}-x_{n}=\theta _{n}[f(x_{n})-x_{n}]+r_{n}\rightarrow 0\text{ as }%
n\rightarrow \infty ,
\]%
then in view of the previous lemma, there exist two real number $0\leq a\leq b\leq 1$
such that $\omega (\{x_{n}\})=[a,b].$ Let us suppose that $a<b.$ We claim
that in this case $f(w)=w$ for any $w\in (a,b).$ Let $w\in (a,b).$ Suppose
for the sake of contradiction that $f(w)>w.$ Then thanks to the continuity of $f$ there exist two positive
real numbers $\delta $ and $\varepsilon $ such that $[w-\delta ,w+\delta
]\subset (a,b)$ and $f(x)\geq x+\varepsilon $ for any $x\in \lbrack w-\delta
,w+\delta ].$ There exists $n_{0}\in \mathbb{N}$ such that for any $n\geq
n_{0}$%
\begin{eqnarray}
\left\vert x_{n+1}-x_{n}\right\vert &<&\delta ,  \label{h1} \\
r_{n} &\geq &-\varepsilon \theta _{n}.  \label{hh1}
\end{eqnarray}%
Moreover, there exist two positive integers $n_{2}>n_{1}>n_{0}$ such that $%
x_{n_{1}}\in (w+\delta ,b)$ and $x_{n_{2}}\in (a,w-\delta ).$ Let $m$ be the
greatest integer between $n_{1}$ and $n_{2}$ such that $x_{m}\geq w-\delta .$
Therefore, in view of (\ref{h1}), we have $x_{m+1}<w-\delta \leq x_{m}<w.$
Now by going back to the dynamical system (\ref{PMp}) and using the fact that
$f(x_{m})\geq x_{m}+\varepsilon ,$ we get%
\begin{eqnarray*}
x_{m+1} &\geq &x_{m}+\theta _{m}\varepsilon +r_{m} \\
&\geq &x_{m}~\ \ (\text{ in view of (\ref{hh1})).}
\end{eqnarray*}%
This contradicts the fact that $x_{m+1}<x_{m}.$ Similarly, we can show that
the assumption $f(w)<w$ leads to a contradiction. We therefore conclude $%
f(w)=w$ for any $w\in (a,b)$ if $a<b.$ We continue working under the
assumption $a<b.$ Let $c=\frac{b+a}{2}$ and set $\alpha =\frac{b-a}{8}.$
There exists a positive integer $m_{0}$ such that
\begin{eqnarray}
\left\vert x_{m_{0}}-c\right\vert &<&\alpha ,  \label{h3} \\
\left\vert \sum_{n=m_{0}}^{m}r_{n}\right\vert &<&\alpha ,\forall m\geq m_{0}.
\label{h4}
\end{eqnarray}%
Let $m_{1}>m_{0}$ be the smallest positive integer such that $%
x_{m_{1}}\notin \lbrack c-2\alpha ,c+2\alpha ]$ (such integer exists since $%
[c-2\alpha ,c+2\alpha ]\subset (a,b)$).  Therefore, for any integer $n\in
[m_{0},m_{1}-1],$ we have $f(x_{n})=x_{n}$ and

\[
x_{n+1}=x_{n}+r_{n}.
\]%
Summing up the previous equalities, we obtain%
\[
x_{m_{1}}=x_{m_{0}}+\sum_{n=m_{0}}^{m_{1}-1}r_{n}.
\]%
Combining this identity with the estimations (\ref{h3}) and (\ref{h4}), we
get
\[
\left\vert x_{m_{1}}-c\right\vert \leq 2\alpha
\]%
which contradicts the definition of $x_{m_{1}}$. We then conclude that $a=b$ which in view of Lemma \ref{l1} means that the sequence $\{x_{n}\}$ is converging
in $\mathbb{R}$; let $x_{\infty }$ be its limit$.$ Clearly $x_{\infty }\in [0,1].$ Let us prove by contradiction that $f(x_{\infty })=x_{\infty
}.$ Let us suppose that $f(x_{\infty })>x_{\infty }.$ Thanks to the
continuity of the function $f,$ there exists a positive real number $\beta
>0 $ and a positive integer $p_{0}$ such that, for any $n\geq p_{0},$
\begin{eqnarray*}
f(x_{n}) &\geq &x_{n}+2\beta , \\
r_{n} &\geq &-\beta _{n}\theta _{n}.
\end{eqnarray*}%
Hence, for any integer $n\geq p_{0},$%
\begin{eqnarray*}
x_{n+1}-x_{n} &=&\theta _{n}(f(x_{n})-x_{n})+r_{n} \\
&\geq &2\beta \theta _{n}+r_{n} \\
&\geq &\beta \theta _{n}.
\end{eqnarray*}%
Summing up these inequalities leads to the inequalities%
\begin{eqnarray*}
\beta \sum_{n=p_{0}}^{\infty }\theta _{n} &\leq &x_{\infty }-x_{p_{0}} \\
&\leq &1
\end{eqnarray*}%
that contradict the assumption on the sequence $\{\theta _{n}\}.$ Similarly,
we can show that $f(x_{\infty })$ can not be less than $x_{\infty }.$ We
therefore conclude that $f(x_{\infty })=x_{\infty }$ which completes the
proof.
\end{proof}
\section{On the convergence of the perturbed continuous dynamical system (\ref{CMp})}
This section is devoted to the proof of Theorem \ref{Th2}. Before starting the proof of this theorem, let us recall the definition and
some simple proprieties of the omega limit set of a continuous real valued
function defined on the interval $[0,\infty ).$ (For more details on the notion of the omega limit in a more general context, we refer the readers to the books \cite{Hal}, \cite{Har}, and \cite{JH} .
\begin{definition}
let $u:[0,\infty )\rightarrow \mathbb{R}$ be a continuous function. The
omega limit set associated to the function $u$ (denoted by $\omega (u(t))$)
is the set of real numbers $z$ such that $\lim_{n\rightarrow \infty
}u(t_{n})=z$ for some positive sequence $\{t_{n}\}$ such that $t_n\rightarrow \infty$ as $n\rightarrow \infty$.
\end{definition}

The following lemma gathers some well known properties of the omega limit
set associated to a continuous and bounded real valued function $u$ defined
on the interval $[0,\infty ).$

\begin{lemma}
\label{l2}
let $u:[0,\infty )\rightarrow \mathbb{R}$ be a continuous and bounded
function function. Then
\begin{enumerate}
\item $\omega (u(t))=\cap _{k\in N}\overline{u([k,\infty ))}^{\mathbb{R}%
},$ where $\overline{u([k,\infty ))}^{\mathbb{R}}$ denotes the closure of
the set $u([k,\infty ))$ in $\mathbb{R}.$
\item $\omega (u(t))$ is a nonempty compact interval of $\mathbb{R}.$
\item $u(t)$ converges to some real number $u_{\infty }$ as $%
t\rightarrow \infty $ if ad only if $\omega (u(t))=\{u_{\infty }\}.$
\end{enumerate}
\end{lemma}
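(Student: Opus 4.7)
The plan is to prove the three assertions in order, deducing each from the previous where possible and relying on boundedness of $u$ throughout.

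For (1), I would argue by double inclusion. If $z\in\omega(u(t))$, fix a sequence $t_{n}\to\infty$ with $u(t_{n})\to z$; for every $k\in\mathbb{N}$ we have $t_{n}\geq k$ for $n$ large, so $u(t_{n})\in u([k,\infty))$ eventually, and passing to the limit gives $z\in\overline{u([k,\infty))}$. Conversely, if $z$ belongs to every $\overline{u([k,\infty))}$, then for each $k$ I can pick some $t_{k}\geq k$ with $|u(t_{k})-z|<1/k$; then $t_{k}\to\infty$ and $u(t_{k})\to z$, so $z\in\omega(u(t))$.

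For (2), using (1), $\omega(u(t))$ is an intersection of closed sets, hence closed. Since $u$ is bounded, each $\overline{u([k,\infty))}$ is a bounded closed subset of $\mathbb{R}$, hence compact, and the intersection is therefore also bounded, so $\omega(u(t))$ is compact. Nonemptiness follows from the Bolzano--Weierstrass theorem: the bounded sequence $\{u(n)\}_{n\in\mathbb{N}}$ admits a convergent subsequence whose limit lies in $\omega(u(t))$. For the interval property, I would use that $[k,\infty)$ is connected and $u$ is continuous, so $u([k,\infty))$ is connected in $\mathbb{R}$, and hence so is its closure $\overline{u([k,\infty))}$. The family $\{\overline{u([k,\infty))}\}_{k\in\mathbb{N}}$ is a \emph{decreasing} sequence of nonempty compact connected subsets of $\mathbb{R}$, so its intersection is again compact and connected (a standard fact in a Hausdorff space for nested compact connected sets). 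A compact connected subset of $\mathbb{R}$ is a closed bounded interval, which is the claim.

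For (3), the forward direction is immediate: if $u(t)\to u_{\infty}$, then for any sequence $t_{n}\to\infty$ one has $u(t_{n})\to u_{\infty}$, hence $\omega(u(t))=\{u_{\infty}\}$. For the converse I would argue by contradiction: if $u(t)\not\to u_{\infty}$ there exist $\varepsilon>0$ and a sequence $t_{n}\to\infty$ with $|u(t_{n})-u_{\infty}|\geq\varepsilon$; boundedness and Bolzano--Weierstrass yield a subsequence $u(t_{n_{k}})\to z$, and by construction $|z-u_{\infty}|\geq\varepsilon$, so $z\in\omega(u(t))\setminus\{u_{\infty}\}$, contradicting the assumption.

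I expect no serious obstacles; the only point that deserves a careful sentence is the connectedness statement in (2), namely that a decreasing intersection of compact connected sets in $\mathbb{R}$ remains connected — this is where one actually uses boundedness (to get compactness) rather than merely closedness.
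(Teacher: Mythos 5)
Your proof is correct and complete. Note that the paper itself offers no proof of this lemma (it is stated as a collection of well-known facts), so there is no argument of the author's to compare against; your double-inclusion proof of (1), the deduction of (2) from (1), and the Bolzano--Weierstrass argument for (3) are exactly the standard route. The one step you rightly flag as needing care --- that a nested intersection of nonempty compact connected sets is connected --- is a genuine theorem you may cite, but in $\mathbb{R}$ there is an even more elementary shortcut: by (1) and connectedness of $u([k,\infty))$, each $\overline{u([k,\infty))}^{\mathbb{R}}$ is already a compact interval $[a_k,b_k]$ with $a_k$ nondecreasing and $b_k$ nonincreasing, so the intersection is $[\sup_k a_k,\inf_k b_k]$, visibly a nonempty compact interval. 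Either way the lemma is established.
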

The proof of Theorem \ref{Th2} uses also this simple and technical lemma inspired from
the papers  and \cite{CC} and \cite{JM}.

\begin{lemma}
\label{l3}
let $u:[0,\infty )\rightarrow \mathbb{R}$ be a continuous and bounded
function. If $\omega (u(t))=[a,b]$ with $a<b$ then for any real numbers $c$
and $d$ such that $a<c<d<b$ and any $T_{0}>0$ there exist four reals numbers
$s_{1},s_{2},\tau _{1}$ and $\tau _{2}$ such that:
\begin{enumerate}
\item $\tau _{i}>s_{i}>T_{0}$, $i=1,2.$
\item $u(t)\in [c,d]$ for any $t\in [s_{i},\tau _{i}],$ $%
i=1,2.$
\item $u(s_{1})=c$ and $u(\tau _{1})=d.$
\item $u(s_{2})=d$ and $u(\tau _{2})=c.$
\end{enumerate}
\end{lemma}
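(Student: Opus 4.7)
The plan is to exploit the hypothesis $\omega(u(t))=[a,b]$ with $a<b$: since both endpoints $a$ and $b$ belong to $\omega(u(t))$, I can fix $\alpha\in(a,c)$ and $\beta\in(d,b)$ and extract sequences $\sigma_n,\rho_n\to\infty$ with $u(\sigma_n)\to\alpha$ and $u(\rho_n)\to\beta$. Consequently, past any prescribed time there exist instants at which $u$ is strictly below $c$ and instants at which $u$ is strictly above $d$. From this repository of level-crossings the four numbers $s_1,\tau_1,s_2,\tau_2$ will be produced as suitable infima and suprema, with their required properties enforced by continuity and the intermediate value theorem.

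To construct the pair $(s_1,\tau_1)$, I would first select $t_1>T_0$ with $u(t_1)<c$ and then $t_2>t_1$ with $u(t_2)>d$. Setting
\[
\tau_1 \;=\; \inf\{t\ge t_1 \;:\; u(t)\ge d\},
\]
continuity of $u$ together with $u(t_1)<c<d\le u(t_2)$ yields $t_1<\tau_1\le t_2$, $u(\tau_1)=d$, and $u(t)<d$ on $[t_1,\tau_1)$. Then defining
\[
s_1 \;=\; \sup\{t\in[t_1,\tau_1] \;:\; u(t)\le c\},
\]
the inequalities $u(t_1)\le c$ and $u(\tau_1)=d>c$ give $t_1\le s_1<\tau_1$, $u(s_1)=c$, and $u(t)>c$ on $(s_1,\tau_1]$. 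Combining the two controls yields $u([s_1,\tau_1])\subset[c,d]$ with the prescribed endpoint values.

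The construction of $(s_2,\tau_2)$ is completely symmetric, carried out on a time window lying to the right of $\tau_1$ so that the lower bound $s_2>T_0$ is automatic: pick $t_3>\tau_1$ with $u(t_3)>d$ and $t_4>t_3$ with $u(t_4)<c$, then set
\[
\tau_2 \;=\; \inf\{t\ge t_3 \;:\; u(t)\le c\}, \qquad s_2 \;=\; \sup\{t\in[t_3,\tau_2] \;:\; u(t)\ge d\},
\]
and the same reasoning delivers $u(s_2)=d$, $u(\tau_2)=c$, and $u([s_2,\tau_2])\subset[c,d]$.

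No step is a genuine obstacle: the argument is purely topological, relying on the continuity of $u$ and the intermediate value theorem. The one point that deserves a little care is the choice of infimum versus supremum and verifying that the extrema are attained exactly at the levels $c$ or $d$, which is precisely what guarantees that the image on the closed subintervals is contained in $[c,d]$ rather than just in the open strip $(c,d)$.
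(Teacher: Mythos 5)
Your proposal is correct and follows essentially the same strategy as the paper's proof: locate times past $T_0$ where $u$ is below $c$ and above $d$, then use a supremum and an infimum of suitable level sets together with continuity to pin down the exact crossing times and confine $u$ to $[c,d]$ in between. The only (immaterial) difference is the order in which the two extremal times are defined: the paper first takes the supremum defining $s_1$ and then the infimum defining $\tau_1$ on $[s_1,\tau_0]$, while you first fix $\tau_1$ as an infimum and then take $s_1$ as a supremum over $[t_1,\tau_1]$.
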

\begin{proof}
Let $T_{0}>0$ and $c$ and $d$ two real number such that $a<c<d<b.$ From the
definition of $\omega (u(t)),$ there exist two real numbers $s_{0}$ and $%
\tau _{0}$ such that $\tau _{0}>s_{0}>T_{0},$ $u(s_{0})<c$ and $u(\tau
_{0})>d.$ Let $s_{1}=\sup \{t\in [s_{0},\tau _{0}]:u(t)<c\}.$ sing
the continuity of $u$ we can easily verify that $s_{0}<s_{1}<\tau _{0},$ $%
u(s_{1})=c$ and $u(t)\geq c$ for any $t\in \lbrack s_{1},\tau _{0}]$. Let $%
\tau _{1}=\inf \{t\in [s_{1},\tau _{0}]:u(t)>d\}.$ Again the
continuity of $u$ ensures that $\tau _{0}>\tau _{1}>s_{1},u(\tau _{1})=d$
and $u(t)\leq d$ for every $t\in [s_{1},\tau _{1}].$ This completes
the construction of $s_{1}$ and $\tau _{1}.$ The construction of $s_{2}$ and
$\tau _{2}$ can be done similarly. In fact, there exist two real numbers $%
s_{0}^{\prime }$ and $\tau _{0}^{\prime }$ such that $\tau _{0}>s_{0}>T_{0},$
$u(\tau _{0}^{\prime })<c$ and $u(s_{0}^{\prime })>d.$ Now let $s_{2}=\sup
\{t\in [s_{0}^{\prime },\tau _{0}^{\prime }]:u(t)>d\}$ and $\tau
_{2}=\inf \{t\in [s_{2},\tau _{0}^{\prime }]:u(t)<c\}.$ Again by
using the continuity of $u$ we can verify that $s_{2}$ and $\tau _{2}$
satisfy all the required proprieties.
\end{proof}

Now we are in position to prove Theorem \ref{Th2}.

\begin{proof}
From Lemma \ref{l2}, there exist two real numbers $0\leq a\leq b\leq 1$ such that $\omega
(x(t))=[a,b].$ First, for the sake of a contradiction, we suppose that $a<b$.  Now let $w$ be an arbitrary element of the open interval $(a,b).$ Let us
suppose that $f(w)>w.$ Then there exist $\varepsilon ,\delta >0$ such that $%
[w-\delta ,w+\delta ]\subset (a,b)$ and $f(z)>z+\varepsilon $ for any $z\in [w-\delta ,w+\delta ].$ Let $T_{0}>0$ such that $r(t)\geq
-\varepsilon \theta (t),\forall t\geq T_{0}.$ According to Lemma \ref{l3}, there
exist $\tau _{2}>s_{2}>T_{0}$ such that $x(s_{2})=w+\delta ,~x(\tau
_{2})=w-\delta ,$ and $x(t)\in [w-\delta ,w+\delta ]$ for any $t\in
[s_{2},\tau _{2}].$ Therefore, for every $t\in [s_{2},\tau
_{2}],$%
\begin{eqnarray*}
x^{\prime }(t) &=&\theta (t)(f(x(t))-x(t))+r(t) \\
&\geq &\varepsilon \theta (t)+r(t) \\
&\geq &0.
\end{eqnarray*}%
Hence by integrating on the interval $[s_{2},\tau _{2}],$ we get the
contradiction $-2\delta \geq 0.$ Similarly, by sing the times $s_{1}$ and $%
\tau _{1}$ instead of $s_{2}$ and $\tau _{2},$\ we can verify that the
assumption $f(w)<w$ leads also to a contradiction. We then conclude that if $%
a<b$ then $f(w)=w$ for any $w\in [a,b].$ We continuous working under
the assumption $a<b.$ Let $c=\frac{a+b}{2}$ and $\alpha =\frac{b-a}{4}.$ Let
$T_{1}>0$ be greater enough such that
\begin{equation}
\left\vert \int_{s}^{\tau }r(t)dt\right\vert \leq \alpha ,~\forall \tau
>s>T_{1}.  \label{h5}
\end{equation}%
According Lemma \ref{l3}, there exist $\tau _{1}>s_{1}>T_{1}$ such that $%
x(s_{1})=c-\alpha ,~x(\tau _{1})=c+\alpha ,$ and $x(t)\in \lbrack c-\alpha
,c+\alpha ]$ for any $t\in \lbrack s_{1},\tau _{1}].$ Therefore, for any $%
t\in \lbrack s_{1},\tau _{1}],$

\begin{eqnarray*}
x^{\prime }(t) &=&\theta (t)(f(x(t))-x(t))+r(t) \\
&=&r(t).
\end{eqnarray*}%
Hence, by integrating the last equality between $s_{1}$ and $\tau _{1}$ we
get
\[
2\alpha =\int_{s_{1}}^{\tau _{1}}r(t)dt,
\]%
which contradicts (\ref{h5}). We therefore conclude that $a=b$ which means
thanks to Lemma \ref{l2}, that $x(t)$ converges as $t\rightarrow \infty $ to some the
real number $x_{\infty }=a=b.$ Let s prove It is clear that $x_{\infty }\in
\lbrack 0,1],$ let us show that it is a fixed point of $f.$ For the sake of
absurdity, let us for instance assume that $f(x_{\infty })<x_{\infty }.$ The
continuity of $x(t)$ an $f$ and the assumption on $r(t)$ assures the
existence of $\varepsilon >0$ and $T_{2}$ such that for any $t\geq T_{2}$%
\begin{eqnarray*}
x(t)-f(x(t)) &\geq &2\varepsilon , \\
r(t) &\leq &\varepsilon \theta (t).
\end{eqnarray*}%
Therefore, for every $t\geq T_{2},$
\begin{eqnarray*}
x^{\prime }(t) &=&\theta (t)(f(x(t))-x(t))+r(t) \\
&\leq &-\varepsilon \theta (t).
\end{eqnarray*}%
Integrating the last inequality between $T_{2}$ and $T>T_{2}$ and then letting $%
T\rightarrow \infty ,$ we get the inequality%
\[
\varepsilon \int_{T_{2}}^{\infty }\theta (t)dt\leq x(T_{2})-x_{\infty }
\]%
that contradict the assumption on $\theta .$ Similarly, we can show that
$f(x_{\infty })$ can not be greater than $x_{\infty }.$ We therefore
conclude that $f(x_{\infty })=x_{\infty }.$
\end{proof}
\section{A numerical study of the rate of convergence of the processes (\ref{PMp}) and (\ref{CMp})}
In this section, we investigate numerically the effect of the speed of the vanishing of the sequence $\{\theta_n\}$ and the amplitude of the error term $\{r_n\}$ on the rate of convergence of the sequence $\{x_n\}$ generated by the perturbed process (\ref{PMp}) to a fixed point of $f$. We consider the case where The sequence $\{\theta_n\}$ is defined by $\theta_n=\frac{1}{(n+1)^\alpha}$
where $0<\alpha\leq1$, and the objective function $f:[0,1]\rightarrow[0,1]$ is given by:
\[
f(x)=\left\{
\begin{array}{ll}
2(\frac{1}{4}-x), & 0\leq x\leq \frac{1}{4}, \\
4(x-\frac{1}{4}), & \frac{1}{4}\leq x\leq \frac{1}{2}, \\
4(\frac{3}{4}-x), & \frac{1}{2}\leq x\leq \frac{3}{4}, \\
2(x-\frac{3}{4}), & \frac{3}{4}\leq x\leq 1,%
\end{array}%
\right.
\]
A simple calculation shows that $f$ has exactly three fixed points: $x_{1f}=\frac{1}{6}$, $x_{2f}=\frac{1}{3}$, and $x_{3f}=\frac{3}{5}$. We can now introduce our example of the process (\ref{PMp}). Let $x_0$ be an arbitrary element of $[0,1]$. The sequence $\{x_n\}$ is generated by the iterative stochastic process
\begin{equation}\label{Eq1}
  x_{n+1}=\Pi((1-\theta_n)x_n+\theta_n f(x_n)+A \frac{M_n}{1+n^2}),
\end{equation}
where $ A>0$ is a constant and $\{M_n\}$ is a sequence of i.i.d random variables such that each $M_n$ follows the uniform low ${U}([-1,1])$. The function $\Pi:\mathbb{R}\rightarrow[0,1]$ is the metric projection on the interval $[0,1]$ which is defined explicitly by:
\[
\Pi(x)=\left\{
\begin{array}{ll}
0, & x\leq 0, \\
x, & 0\leq x\leq 1, \\
1, & x\geq 1.
\end{array}%
\right.
\]
Clearly, the sequence $\{x_n\}$ belongs to the interval  $[0,1]$ and satisfies the process (\ref{PMp}) with an error term $r_n=\Pi(y_n+A\frac{M_n}{1+n^2})-y_n$ where $y_n=(1-\theta_n)x_n+\theta_n f(x_n)$. Since $y_n\in[0,1]$,
\[
\left\vert r_n\right\vert\leq \frac{A}{1+n^2}.
\]
Therefore, according to Theorem \ref{Th1}, the sequence $\{x_n\}$ converges to a fixed point $x_{\infty}$. For the two chosen values of the amplitude of the error term $A=0.1$ and $0.001$ and some chosen values of $\alpha \in [0,1]$ and $\epsilon>0$, we perform $K_{\text{max}}=100$ times the process (\ref{Eq1}) with arbitrary initial data $x_0$ in $[0,1]$ under the stoping criteria $ \left\vert f(x_n)-x_n\right\vert<\epsilon$. Let $N(A,\alpha,\epsilon)$ be the average of number of iterations needed to achieve the stopping criteria. The following two tables give  $N(0.1, \alpha,\epsilon)$ and $N(0.001, \alpha,\epsilon)$  for some values of $\alpha$ and $\epsilon$.

\begin{table}[h]
\centering
\begin{tabular}{ccccc}
\hline\hline
$\alpha/\epsilon $ & $\epsilon=0.1 $ & $\epsilon=0.01 $& $\epsilon=0.001 $& $\epsilon=0.0001 $\\ [0.5ex]
\hline
$\alpha=0.1$ & 4.27&77.07&117.41&133.64 \\
$\alpha=0.2$ & 5.83&24.40&36.46&41.72 \\
$\alpha=0.4$ & 5.05&12.83&17.03&22.50 \\
$\alpha=0.6$ & 4.04&7.65&11.84&20.54 \\
$\alpha=0.8$ & 3.53&6.47&11.20&21.63 \\
$\alpha=0.9$ & 3.65&5.83&10.73&25.07 \\
$\alpha=1$   & 3.40&6.14& 11.44& 28.89\\ [1ex]
\end{tabular}
\caption{$N(A=0.1,\alpha,\epsilon)$}
\end{table}
\begin{table}[h]
\centering
\begin{tabular}{ccccc}
\hline\hline
$\alpha/\epsilon $ & $\epsilon=0.1 $ & $\epsilon=0.01 $& $\epsilon=0.001 $& $\epsilon=0.0001 $\\ [0.5ex]
\hline
$\alpha=0.1$ & 4.88&76.14&119.15&134.84 \\
$\alpha=0.2$ & 5.14&22.94&35.09&38.39 \\
$\alpha=0.4$ & 4.97&11.52&16.49&17.81 \\
$\alpha=0.6$ & 3.95&7.39&9.49& 10.85\\
$\alpha=0.8$ & 3.46&5.87&6.68&8.63 \\
$\alpha=0.9$ & 3.57&5.72&6.70&9.20 \\
$\alpha=1$   & 3.28&4.89& 6.60& 8.78\\ [1ex]
\end{tabular}
\caption{$N(A=0.001,\alpha,\epsilon)$}
\end{table}
We notice that the process (\ref{PMp}) converges faster if $\alpha$ is close to $1$ (i.e., the sequence $\{\alpha_n\}$ converges relatively quickly to zero) and $A$ is small (i.e., the perturbation term $\{r_n\}$ is relatively weak). In this case the speed of the convergence of the process (\ref{PMp}) is slightly better than the rate of the convergence of the classical bisection method (applied for the function $f(x)-x$) for which $N(\epsilon)$, the number of needed iterations to achieve the precision $\epsilon$,  is equal to the entire part of  $\frac{-\log(\epsilon)}{\log(2)}$ where $0<\epsilon<1$, \cite{Con}.
\section{Annex}
In this section, we give a complete and detail proof of original Mann's convergence result (Theorem \ref{Th01}).
\begin{proof}
The function $g(x)=f(x)-x$ defined on the interval $[0,1]$ is continuous,
satisfies $g(0)\leq 0$ and $g(1)\geq 0,$ and has $p$ as a unique root, then
from the intermediate value theorem, $g(x)>0$ on the interval $[0,p)$ and $%
g(x)<0$ on $(p,1].$ Now let $\varepsilon >0$ be an arbitrary but a fixed
real number. There exists $\delta >0$ such that
\begin{equation}
(x\in [0,1]\wedge \left\vert x-p\right\vert \geq \varepsilon
)\Rightarrow \left\vert g(x)\right\vert \geq \delta .  \label{m1}
\end{equation}%
On the other hand since,
\begin{eqnarray*}
\left\vert x_{n+1}-x_{n}\right\vert  &=&\frac{1}{n+1}\left\vert
g(x_{n})\right\vert  \\
&\leq &\frac{1}{n+1},
\end{eqnarray*}%
there exists a positive integer $n_{0}$ such that%
\begin{equation}
\left\vert x_{n+1}-x_{n}\right\vert <\varepsilon , \forall n\geq n_{0}.  \label{m2}
\end{equation}%
Now let us suppose that for any integer $n\geq n_{0},$ $x_{n}\notin [p-\varepsilon ,p+\varepsilon ].$ This implies that
either $x_{n}>p+\varepsilon $ for any $n\geq n_{0}$ or $x_{n}<p-\varepsilon $
for any $n\geq n_{0}$, because otherwise there exists $m\geq n_0$ such that $x_m>p+\varepsilon $ and  $x_{m+1}<p-\varepsilon $ which is impossible in view of (\ref{m2}). Let us assume for instance that $x_{n}>p+\varepsilon $
for any $n\geq n_{0}.$ Then, from (\ref{m1}) and the fact that $g(x)<0$ on $%
(p,1],$ we deduce that%
\[
x_{n+1}-x_{n}=\frac{1}{n+1}g(x_{n})\leq -\frac{\delta }{n+1},~\forall n\geq
n_{0}.
\]%
Therefore, for any $n>n_{0},$ we have the inequality%
\[
x_{n}-x_{n_{0}}\leq -\delta \sum_{k=n_{0}}^{n-1}\frac{1}{k+1}
\]%
which implies $\lim_{n\rightarrow \infty }x_{n}=-\infty $ contradicting the
fact that $\{x_{n}\}$ is bounded. We therefore conclude that there exists $%
n_{1}\geq n_{0}$ such that $x_{n_{1}}\in [p-\varepsilon
,p+\varepsilon ].$ We will now prove that for any $n\geq n_{0},$ if $x_{n}\in
[p-\varepsilon ,p+\varepsilon ]$ then $x_{n}\in [p-\varepsilon
,p+\varepsilon ]$. Let $n\geq n_{0}$ such that $x_{n}\in [
p-\varepsilon ,p+\varepsilon ].$ We consider the two possible cases:

\par\noindent The first case $x_{n}\in [0,p+\varepsilon ].$ We have $g(x_{n})\leq 0,
$ then $x_{n+1}=x_n+\theta_n g(x_n)\leq x_{n}.$ Combining this inequality with the fact that $%
\left\vert x_{n+1}-x_{n}\right\vert <\varepsilon ,$ we deduce that $%
x_{n+1}\in [p-\varepsilon ,p+\varepsilon ].$

\par\noindent The second case $x_{n}\in [p-\varepsilon ,0].$  We have $%
g(x_n)\geq 0,$ then $x_{n+1}\geq x_{n}.$ Hence, by using again the
fact $\left\vert x_{n+1}-x_{n}\right\vert <\varepsilon ,$ we also deduce
that $x_{n+1}\in [p-\varepsilon ,p+\varepsilon ].$

Therefore, by induction we conclude that, for any $n\geq n_{1},x_{n}\in
[p-\varepsilon ,p+\varepsilon ].$ This means that $ x_n\rightarrow p$ as $n\rightarrow\infty$ and therefore completes the proof of the theorem.
\end{proof}

\end{document}